\newcommand{\Op}{\mathrm{Op}}
\newcommand{\QL}{\mathcal{Q}}
\newcommand{\om} {\omega}
\newcommand{\ep} {\varepsilon}
\newcommand{\al} {\alpha}
\newcommand{\la} {\lambda}
\newcommand{\La} {\Lambda}
\newcommand{\N} {\mathbb{N}}
\renewcommand{\geq}{\geqslant}
\renewcommand{\leq}{\leqslant}
\newtheorem{theorem}{Theorem}  
\newtheorem{corollary}{Corollary}
\newtheorem{definition}{Definition}
\newtheorem{lemma}{Lemma}
\theoremstyle{definition}\newtheorem{remark}{Remark}
\title{Observability for generalized Schr\"odinger equations and quantum limits on product manifolds}
\author{Emmanuel Humbert\footnote{Institut Denis Poisson, UFR Sciences et Technologie, Facult\'e Fran\c cois Rabelais, Parc de Grandmont, 37200 Tours, France (\texttt{emmanuel.humbert@lmpt.univ-tours.fr}).}
\and Yannick Privat\footnote{IRMA, Universit\'e de Strasbourg, CNRS UMR 7501, 7 rue Ren\'e Descartes, 67084 Strasbourg, France (\texttt{yannick.privat@unistra.fr}).}
	\and Emmanuel Tr\'elat\footnote{Sorbonne Universit\'e, CNRS, Universit\'e de Paris, Inria, Laboratoire Jacques-Louis Lions (LJLL), F-75005 Paris, France (\texttt{emmanuel.trelat@sorbonne-universite.fr}).}}
\begin{document}

\maketitle

\begin{abstract}
Given a closed product Riemannian manifold $N=M \times M'$ equipped with the product Riemannian metric $g=h + h'$, we explore the observability properties for the generalized Schr\"odinger equation $i \partial_t u = F(\triangle_g) u$, where $\triangle_g$ is the Laplace-Beltrami operator on $N$ and $F : [0,+\infty) \to [0,+\infty)$ is an increasing function. In this note, we prove observability in finite time on any open subset $\om$ satisfying the so-called Vertical Geometric Control Condition, stipulating that any vertical geodesic meets $\om$, under the additional assumption that the spectrum of $F(\triangle_g)$ satisfies a gap condition.
A first consequence is that observability on $\om$ for the Schr\"odinger equation is a strictly weaker property than the usual Geometric Control Condition on any product of spheres. 
A second consequence is that the Dirac measure along any geodesic of $N$ is never a quantum limit. 
\end{abstract}

\section{Introduction and main results}
Let $(M,h)$, $(M',h')$ be closed Riemannian manifolds, and let $\triangle_h$ and $\triangle_{h'}$ be their respective (nonnegative) Laplace-Beltrami operators. We consider the Riemannian product manifold $(N,g)$ defined by $N=M\times M'$ and $g=h+h'$. Let $\triangle_g=\triangle_h\otimes\triangle_{h'}$ be the corresponding Laplace-Beltrami operator on $N$. Let $F : [0,+\infty) \to [0,+\infty)$ be an arbitrary increasing function. 
We consider the generalized Schr\"odinger equation 
\begin{equation} \label{schrod}
 i \partial_t u = F(\triangle_g) u
\end{equation}
on $M$, and we are interested in finding characterizations of the observability property for \eqref{schrod} on any open subset $\om\subset N$.

We denote by $0=\mu_0\leq\mu_1\leq\cdots\leq\mu_k\leq\cdots$ (resp., $0=\mu'_0\leq\mu'_1\leq\cdots\leq\mu'_k\leq\cdots$) the eigenvalues of $\triangle_h$ (resp., of $\triangle_{h'}$), associated with a Hilbert eigenbasis $(\phi_k)_{k\in\N}$ of $L^2(M)$ (resp., $(\phi'_k)_{k\in\N}$ of $L^2(M')$). We also denote by $0=\lambda_0<\lambda_1<\cdots<\lambda_k<\cdots$ (resp., $0=\lambda'_0<\lambda'_1<\cdots<\lambda'_k<\cdots$) its distinct eigenvalues.
There exists an increasing sequence $(\alpha_k)_{k\in\N}$ (resp., $(\al'_k)_{k\in\N}$) such that for $j=\al_k,\ldots, \al_{k+1}-1$ (resp., $j=\al'_k,\ldots,\al'_{k+1}-1$, $\mu_j = \la_k$ (resp., $\mu'_l = \la_k'$).

Then, $(\phi_j \phi'_k)_{j,k\in\N}$ is an orthonormal basis of $L^2(N)$ of eigenfunctions of $\triangle_g$ associated to the eigenvalues $\mu_j + \mu'_k$. 
The operator $F(\triangle_g)$ is spectrally defined as the linear operator which, restricted to the eigenspace of $\triangle_g$ associated to the eigenvalue $\mu_j + \mu'_k$, is equal to $F(\mu_j + \mu'_k)\,\mathrm{id}$.
 The fact that $F(\triangle_g)$ and $\triangle_g$ have the same eigenspaces comes from the fact that $F$ is  increasing, so that $F(\mu_j+\mu'_k) = F(\mu_{j'} + \mu'_{k'})$ if and only if  $\mu_j+\mu'_k = \mu_{j'} + \mu'_{k'}$.

By the Stone theorem, $(e^{it F(\triangle_g)})_{t\geq 0}$ is a unitary strongly continuous semigroup on $L^2(N)$.
Given any $y\in L^2(N)$, there exists a unique solution $u\in C^0([0,+\infty),L^2(N)) \cap C^1((0,+\infty),H^{-2}(N))$ of \eqref{schrod} such that $u(0)=y$, given by $u(t)=e^{it F(\triangle_g)}y$.

If $F(s)=s$ then \eqref{schrod} is the usual Schr\"odinger equation, and if $F(s) = \sqrt{s}$ then \eqref{schrod} is the half-wave equation. 

We denote by $dx_g$ the Riemannian volume form on $N$.
Given any $T>0$ and any measurable subset $\om$ of $N$, we define the observability constant $C_T(\om)\geq 0$ as the largest constant $C\geq 0$ such that
\begin{equation}\label{obs}
\int_0^T \int_\om \left| e^{it F(\triangle_g)}y  \right|^2 \geq C \|y\|_{L^2(N)}\qquad \forall y\in L^2(N)
\end{equation}
(observability inequality), i.e., 
\begin{equation*}
\begin{split}
C_T(\om)  &= \inf \left\{ \int_0^T \int_\om \left| e^{it F(\triangle_g)}y  \right|^2 \, dx_g\, dt  \ \mid\ y \in L^2(N),\ \|y\|_{L^2(N)} = 1 \right\} \\
&=  \inf \left\{ \int_0^T \int_\om \Big| \sum_{l,m} b_{jk} e^{i t F(\mu_j + \mu_k)} \phi_j \phi'_k \Big|^2 \, dx_g\, dt \ \mid\ b_{jk} \in \ell^2(\mathbb{C}),\ \sum_{j,k=0}^{+\infty} |b_{jk}|^2 = 1 \right\}
\end{split}
\end{equation*}
We say that the {\em {observability property}} is satisfied for \eqref{schrod} on $(\om,T)$ if $C_T(\om) >0$.

\begin{definition} 
A vertical (resp., horizontal) geodesic of $N$ is a geodesic of the form $t \to (x, \gamma(t))$ (resp., $(\gamma(t),x)$) for some $x \in M$ (resp., for some $x\in M'$) and some geodesic $\gamma$ of $M'$ (resp., of $M$). 
\end{definition}

\begin{definition} 
Let $\om\subset N$ and let $T>0$. We say that $(\om,T)$ satisfies the  {\em{Vertical Geometric Control Condition} }(in short, VGCC) if all vertical geodesics meet $\om$ within time $T$, i.e., $\gamma([0,T]) \cap \om \not= \emptyset$. 
\end{definition}

\begin{definition} 
We say that a family $(a_k)_{k\in\N}$ of real numbers satisfies the {\em  gap condition} if there exists a constant $C >0$ such that for all $j,k\in\N$, we have either $a_j=a_k$ or $|a_k - a_l | \geq C$, i.e., if all distinct elements are at a distance of at least $C$ one from each other.
\end{definition}

\begin{theorem} \label{main} 
Let $T>0$ and $\om$ be an open subset of $N$. If $(\om,T)$ satisfies VGCC and if the family $\left( F(\la_j + \la'_k)\right)_{j,k\in\N}$ satisfies the gap condition, then the observability property is satisfied for \eqref{schrod} on $(\om,T)$. 
\end{theorem}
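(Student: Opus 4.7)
The plan is to reduce observability to a uniform spectral estimate on the eigenfunctions of $\triangle_g$ (equivalently, of $F(\triangle_g)$), and then to prove that spectral estimate via the VGCC; the gap condition supplies the Fourier-analytic bridge between the full evolution and its spectral decomposition.

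\emph{Ingham reduction.} I decompose $y\in L^2(N)$ along the joint eigenspaces $V_\Sigma = \ker(\triangle_g-\Sigma)$, which coincide with those of $F(\triangle_g)$ since $F$ is strictly increasing, and write $y=\sum_\Sigma y_\Sigma$. The gap condition on $(F(\la_j+\la'_k))_{j,k}$ then translates into a gap condition on the distinct values $F(\Sigma)$. Applying the classical Ingham inequality pointwise in $x\in N$ to $t\mapsto\sum_\Sigma e^{itF(\Sigma)}y_\Sigma(x)$ and integrating over $\om$ yields
$$\int_0^T\int_\om |u(t,x)|^2\,dx_g\,dt \;\geq\; C\sum_\Sigma \|y_\Sigma\|^2_{L^2(\om)},$$
after possibly enlarging $T$ (which preserves VGCC) and transferring the observability back to the original $T$ by a standard argument exploiting the unitarity of the evolution and the gap condition. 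This reduces the problem to the uniform spectral estimate: there exists $C>0$ such that $\|w\|^2_{L^2(\om)}\geq C\|w\|^2_{L^2(N)}$ for every eigenvalue $\Sigma$ of $\triangle_g$ and every $w\in V_\Sigma$.

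\emph{Spectral observability.} I argue by contradiction: assume a sequence $(w_n)$ of eigenfunctions $w_n\in V_{\Sigma_n}$ with $\|w_n\|_{L^2(N)}=1$ and $\|w_n\|_{L^2(\om)}\to 0$. If $(\Sigma_n)$ is bounded, one extracts a subsequence with constant $\Sigma_n$ along which $w_n\to w_\infty$ in the finite-dimensional space $V_{\Sigma_n}$; then $\|w_\infty\|_{L^2(N)}=1$ and $w_\infty\equiv 0$ on the open set $\om$, contradicting unique continuation for Laplace eigenfunctions on $N$. In the main case $\Sigma_n\to\infty$, the probability measures $|w_n|^2\,dx_g$ converge, up to subsequence, weakly-$\ast$ to a probability measure $\nu$ on $N$ with $\nu(\om)=0$; this $\nu$ is the spatial projection of a quantum limit $\tilde\nu$ on $S^*N$, a Radon probability measure invariant under the geodesic flow. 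Since $(N,g)=(M,h)\times(M',h')$ is a Riemannian product, the geodesic flow on $T^*N$ factors as the product of the geodesic flows on $T^*M$ and $T^*M'$; combining this product structure with the VGCC and the gap condition, one would derive that the support of $\tilde\nu$ must meet $\pi^{-1}(\om)$, where $\pi:S^*N\to N$ is the canonical projection, contradicting $\nu(\om)=0$.

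The heart of the proof, and what I expect to be the main obstacle, is precisely this high-frequency, quantum-limit step. VGCC directly controls only the purely vertical orbits of the flow on $S^*N$, those with zero horizontal momentum, whereas a priori quantum limits may be supported on oblique orbits mixing horizontal and vertical propagation. The gap condition on $(F(\la_j+\la'_k))_{j,k}$ restricts drastically the pairs $(j,k)$ contributing to a given spectral slice of $F(\triangle_g)$, which should impose a product-compatible decomposition of $\tilde\nu$ isolating the vertical components—where VGCC applies directly. Making this structural dichotomy rigorous, and then running the ensuing contradiction, is the technical core of the argument.
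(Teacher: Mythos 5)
Your Ingham reduction is internally consistent, but it diagonalizes in the wrong variables and thereby lands you on a target that is essentially as hard as the theorem itself. By expanding $y$ over the full joint eigenspaces $V_\Sigma$ of $\triangle_g$ and applying Ingham pointwise on $N$, your diagonal term becomes $\sum_\Sigma\|y_\Sigma\|^2_{L^2(\om)}$, so you must bound $\int_\om|w|^2$ from below uniformly over \emph{all} eigenfunctions $w$ of the product Laplacian. As you yourself observe, VGCC gives no direct handle on such $w$: their semiclassical measures may concentrate on oblique invariant sets, and the structure of quantum limits on $N$ is precisely what the paper \emph{deduces from} Theorem \ref{main} (Corollary \ref{cor3}), not what it feeds into it. Your closing paragraph ("the gap condition should impose a product-compatible decomposition of $\tilde\nu$ isolating the vertical components") is a hope rather than an argument, and you candidly flag it as such; the proof is therefore incomplete at its central step.

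The paper circumvents this with an asymmetric slicing, which is the key idea your proposal is missing: it integrates over $x$ in the \emph{first} factor only and, for each fixed $x$, treats $t\mapsto\sum_{k,m}a_{k,m}(x)\,\phi'_m\, e^{iF(\la_k+\mu'_m)t}$ as a nonharmonic Fourier series with values in $L^2(\om_x)\subset L^2(M')$, where $a_{k,m}(x)=\sum_{l=\al_k}^{\al_{k+1}-1}b_{l,m}\phi_l(x)$ packages the spectral data of $M$ at the point $x$. After Montgomery--Vaughan (this is where the gap condition enters, playing the role of your Ingham step), the surviving diagonal term involves only $\int_{\om_x}|\phi'|^2$ for eigenfunctions $\phi'$ of $\triangle_{h'}$ on $M'$ alone, and the required lower bound $g_1^V(\om)=\inf_{x,\phi'}\int_{\om_x}|\phi'|^2>0$ is a statement about quantum limits on $M'$, where VGCC is exactly GCC for each slice $\om_x$; a compactness argument in $x$ and in the eigenfunctions (Egorov plus Krein--Milman on $M'$) then closes it. Two smaller points: your claim that observability for an enlarged time can be transferred "back to the original $T$ by a standard argument exploiting unitarity" is unsupported (shrinking the time interval only weakens the inequality; note that the paper's own Lemma~\ref{case2} also only yields the estimate for $T$ large enough); and the low-frequency/unique-continuation case you treat carefully is not where any of the difficulty lies.
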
 

Let us comment on this theorem and on VGCC.  

Recall that $(\omega,T)$ satisfies the usual {\em{Geometric Control Condition}} (GCC, see \cite{BardosLebeauRauch,lebeau,rauch-taylor}) whenever every geodesic (not necessarily vertical) meets $\om$ within time $T$.
 Let $\om$ be an open subset of $N$ and $T >0$. If $(\om,T)$ satisfies GCC then it also satisfies VGCC.  
There exist examples where $(\om,T)$ satisfies VGCC but not GCC: for every $x \in M$, we define $\om_x := \left( \{x \} \times M'\right) \cap \om$.
Then, $(\om,T)$ satisfies VGCC if and only if $(\om_x,T)$ satisfies GCC on $M'$ for every $x\in M$.
In particular,  we obtain the following examples:
\begin{itemize}
\item Let $(U_i)_{i \in I}$ be an open covering of $M$, and let $(\om_i)_{i\in I}$ be a family of open subsets of $M'$ satisfying GCC within time $T$. Then, setting $\om = \cup_{i \in I} U_i \times \om_i$, $(\om,T)$ satisfies VGCC. In particular, if $\om'$ is an open subset of $M'$ satisfying GCC, then $M \times \om'$ satisfies VGCC. 
\item Let $\gamma$ be a non-vertical geodesic. Given any $\ep>0$, we consider the closed $\ep$-neighborhood of the support $\Gamma$ of $\gamma$  defined by $U_\ep=\{ x \in N \ \mid\ d_g(x,\Gamma) \leq \ep\}$, where $d_g$ is the Riemannian distance on $N$. We set $\om_\ep = N \setminus U_\ep$. Then, for any $T >0$ and any $\ep>0$ small enough, $(\om_\ep,T)$ satisfies VGCC. 

For instance, if $\gamma$ is horizontal, we can choose $\ep < \frac{T}{2}$. For the general case, note that for every $x \in M$, $(\om_\ep)_x$ (with the notations above) is contained in the complement of a small ball in $M'$.  
\end{itemize}

Let us now recall some existing results.   It is well known that, when $\om$ is open, GCC is a sufficient condition for observability of the Schr\"odinger equation (see \cite{lebeau}).
It is also well known that, except for Zoll manifolds, i.e., manifolds whose all geodesics are periodic (see \cite{Macia_2011}), GCC is not a necessary assumption.
An example where the Schr\"odinger is observable on $(\om,T)$ but where $(\om,T)$ does not satisfy GCC is given in \cite{jaffard}: in the flat 2D torus, any non empty open set gives observability in any time $T$. This example has been extended to high dimensions in \cite{Komornik}. We also refer to \cite{ananth} for another example, in the Dirichlet disk.

\begin{remark} 
The spectrum of $\triangle_g^{1/2}$ can never satisfy the gap condition on the product manifold $N$.
\end{remark}

\paragraph{Application to the Schr\"odinger equation.}
We assume that $F(s)=s$ so that \eqref{schrod} is now the usual Schr\"odinger equation. 
Theorem \ref{main} can be applied as soon as the spectrum of $\triangle_g$ satisfies the gap condition. 
This is true for instance when $M$ and $M'$ have an integer spectrum, in particular when $M$ and $M'$ are a finite product of standard spheres. 

\begin{corollary} \label{cor1} 
Assume that the spectrum of $\triangle_g$ satisfies the gap condition. 
Let $T>0$ and $\om$ be an open subset of $N$, such that $(\om,T)$ satisfies VGCC but not GCC. Then the Schr\"odinger equation is observable on $(\om,T)$, while GCC is not satisfied.
\end{corollary}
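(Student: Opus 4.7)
The plan is to derive the corollary as a direct application of Theorem~\ref{main}. I would first verify the gap condition required by that theorem: specializing $F(s)=s$, the family $(F(\la_j+\la'_k))_{j,k\in\N}$ is exactly $(\la_j+\la'_k)_{j,k\in\N}$, which enumerates the distinct eigenvalues of $\triangle_g$ on $N=M\times M'$. Therefore the gap condition on the spectrum of $\triangle_g$, which is the standing hypothesis of the corollary, is equivalent to the gap condition appearing in the statement of Theorem~\ref{main}.

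Next, I would combine this observation with the VGCC assumption. Both hypotheses of Theorem~\ref{main} being met, the theorem yields observability of the Schr\"odinger equation on $(\om,T)$; the clause ``while GCC is not satisfied'' is already contained in the hypothesis and needs no further argument. To stress that the statement is not vacuous, I would also recall (following the second bullet after Theorem~\ref{main}) that removing a small $\ep$-neighborhood of a non-vertical geodesic from $N$ produces an open set that satisfies VGCC for any $T>0$ but violates GCC, and that on any finite product of standard spheres the integer spectrum guarantees the required gap condition.

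The hard part: there is essentially none. The corollary is a direct translation of Theorem~\ref{main} into the Schr\"odinger setting ($F=\mathrm{id}$), and the only minor item to check is the elementary identification of the two phrasings of the gap condition spelled out in the first step.
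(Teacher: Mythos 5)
Your proposal is correct and follows exactly the paper's (implicit) argument: specialize Theorem~\ref{main} to $F(s)=s$, note that the gap condition on $(\la_j+\la'_k)_{j,k\in\N}$ is precisely the gap condition on the spectrum of $\triangle_g$, and observe that the failure of GCC is part of the hypothesis. Nothing further is needed.
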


This result provides new examples of configurations where one has observability but not GCC.

%
%

\paragraph{Quantum limits on a product manifold.}
The definition of a quantum limit is recalled in Appendix \ref{ql}.

\begin{corollary} \label{cor3} 
 The support of any quantum limit of $N$ must contain at least an horizontal and a vertical geodesic. In particular, the Dirac measure along any periodic geodesic of $N$ is not a quantum limit. 
\end{corollary}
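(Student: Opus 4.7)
The plan is to argue by contraposition. By the symmetric roles of the two factors, it suffices to prove that the support of any quantum limit $\mu$ of $N$ contains at least one entire vertical geodesic. Assuming, on the contrary, that no vertical geodesic is contained in the closed set $\operatorname{supp}\mu$, I will construct an open set $\om\subset N$ and a time $T>0$ such that $(\om,T)$ satisfies VGCC and $\overline\om\cap\operatorname{supp}\mu=\emptyset$, and I will pick an admissible function $F$ for which the gap condition holds. Theorem~\ref{main} will then yield $C_T(\om)>0$, which will be incompatible with $\mu(\overline\om)=0$.

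To build $\om$, I would use the fact that for each unit vertical tangent vector $(x,v)$, the vertical geodesic $\gamma_{x,v}$ must exit $\operatorname{supp}\mu$ at some time $t_{x,v}$, and I pick an open ball $B_{x,v}$ around $\gamma_{x,v}(t_{x,v})$ whose closure is disjoint from $\operatorname{supp}\mu$. By continuity of the geodesic flow and compactness of the bundle of unit vertical tangent vectors (compact because $N$ is closed), finitely many of these data $(B_i,t_i)$ suffice, so that every vertical geodesic meets some $B_i$ within time $t_i$. Setting $\om=\bigcup_i B_i$ and $T=\max_i t_i$ then gives VGCC and the required disjointness $\overline\om\cap\operatorname{supp}\mu=\emptyset$.

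The gap condition is secured by exploiting the freedom to choose $F$: since $\la_j+\la'_k\to+\infty$, the distinct eigenvalues of $\triangle_g$ form a discrete sequence $0=s_0<s_1<\cdots$ of $[0,+\infty)$, and setting $F(s_n)=n$ and extending piecewise affinely to an increasing function $F:[0,+\infty)\to[0,+\infty)$ places $(F(\la_j+\la'_k))_{j,k}\subset\N$, so the gap condition holds with constant $1$. Theorem~\ref{main} then yields $C_T(\om)>0$.

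The contradiction comes from testing observability on eigenfunctions. Let $(\phi_n)$ be $L^2$-normalized eigenfunctions of $\triangle_g$ with $|\phi_n|^2\,dx_g\rightharpoonup\mu$ weakly-$*$; each $\phi_n$ is also an eigenfunction of $F(\triangle_g)$, so $|e^{itF(\triangle_g)}\phi_n|^2=|\phi_n|^2$ pointwise, and observability collapses to $T\int_\om|\phi_n|^2\,dx_g\geq C_T(\om)$. The portmanteau theorem applied to the closed set $\overline\om$ then gives $\limsup_n\int_{\overline\om}|\phi_n|^2\,dx_g\leq\mu(\overline\om)=0$, contradicting the uniform lower bound $C_T(\om)/T>0$. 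The final assertion follows because, for a non-constant periodic geodesic $\gamma=(\gamma_1,\gamma_2)$ of $N$, at least one of the factor geodesics is non-constant, and its preimage of any point is then discrete, so the one-dimensional support of the associated Dirac measure cannot simultaneously contain a non-degenerate vertical and a non-degenerate horizontal geodesic. The main obstacle I anticipate is the uniformization step producing a single $T$ valid for all vertical geodesics; the rest is a mechanical combination of Theorem~\ref{main} with standard weak-$*$ convergence tools.
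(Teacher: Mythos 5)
Your argument is correct and is essentially the paper's: both proofs choose an increasing $F$ mapping the rearranged distinct values of $\la_j+\la'_k$ to integers so that the gap condition holds trivially, invoke Theorem~\ref{main}, and then contradict the resulting observability by testing on the eigenfunctions generating the quantum limit (whose modulus is unchanged by $e^{itF(\triangle_g)}$) combined with the Portmanteau theorem. The only divergences are in packaging: you argue contrapositively and build $\om$ by a compact cover of the unit vertical vectors (and you also spell out the final Dirac-measure assertion, which the paper leaves implicit), whereas the paper takes $\om_\ep=\{x\in N \mid d_g(x,\Gamma)>\ep\}$ and extracts a vertical geodesic from the failure of VGCC for every $T$; the one point to tighten in your construction is that an exit time $t_{x,v}$ from $\operatorname{supp}\mu$ a priori exists only in $\R$, not in $[0,+\infty)$ --- this is harmless, since if $\gamma_{x,v}([0,+\infty))\subset\operatorname{supp}\mu$ then a limit of time-shifted geodesics would place an entire vertical geodesic inside the closed support, contradicting your standing assumption (alternatively, cover with times in $[-T_0,T_0]$ and take $T=2T_0$, using that the family of vertical geodesics is invariant under the geodesic flow).
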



\section{Proofs}
\subsection{Proof of Theorem \ref{main}} 
Let $\om$ be an open subset of $N$. For any $x \in M$, we set $\om_x= \om \cap \left( \{x\} \times M'\right)$. Theorem \ref{main} follows from the following lemmas, which are in order.
 
 \begin{lemma} \label{lemma_main} 
Assume that there exists $c,T>0$ such that for all complex numbers $(a_{k,m})_{k,m  \in \N}$ and every $x \in M$,
\begin{equation} \label{main_estimate}
\int_0^T  \int_{\om_x }  \Big|\sum_{k,m} a_{k,m} \phi'_m e^{i F(\la_k + \mu'_m)t}  \Big|^2 \, dx_{h'}\, dt \geq c \sum_{k,m} |a_{k,m}|^2  
\end{equation}
then \eqref{schrod} is observable on $(\om,T)$.
\end{lemma}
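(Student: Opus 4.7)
The plan is to reduce the global observability inequality on $(\om,T)$ to the pointwise-in-$x$ estimate \eqref{main_estimate} by slicing along the $M$ factor and applying Parseval in the variable $x\in M$. The key is to match the structure of \eqref{main_estimate}, whose sum is indexed by \emph{distinct} eigenvalues $\la_k$ of $\triangle_h$ rather than by the full sequence $(\mu_j)$, which forces a regrouping of the expansion of $y$.

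Concretely, given $y\in L^2(N)$ with $\|y\|_{L^2(N)}=1$, I expand
\begin{equation*}
y=\sum_{j,m}c_{j,m}\,\phi_j\phi'_m,\qquad \sum_{j,m}|c_{j,m}|^2=1,
\end{equation*}
so that, for every $x\in M$,
\begin{equation*}
\bigl(e^{itF(\triangle_g)}y\bigr)(x,\cdot)=\sum_{j,m}c_{j,m}\,\phi_j(x)\,e^{itF(\mu_j+\mu'_m)}\,\phi'_m.
\end{equation*}
Since $\mu_j=\la_k$ for all $j\in\{\al_k,\ldots,\al_{k+1}-1\}$, I regroup the $j$-sum inside each $\la_k$-eigenspace of $\triangle_h$ and set
\begin{equation*}
a_{k,m}(x):=\sum_{j:\,\mu_j=\la_k}c_{j,m}\,\phi_j(x),
\end{equation*}
which yields
\begin{equation*}
\bigl(e^{itF(\triangle_g)}y\bigr)(x,\cdot)=\sum_{k,m}a_{k,m}(x)\,e^{itF(\la_k+\mu'_m)}\,\phi'_m.
\end{equation*}

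Now, for every fixed $x\in M$, the hypothesis \eqref{main_estimate} applied to the coefficients $(a_{k,m}(x))_{k,m}$ gives
\begin{equation*}
\int_0^T\!\!\int_{\om_x}\bigl|e^{itF(\triangle_g)}y\bigr|^2\,dx_{h'}\,dt\ \geq\ c\sum_{k,m}|a_{k,m}(x)|^2.
\end{equation*}
I then integrate in $x$ against $dx_h$. On the left-hand side, Fubini and the identity $\om=\bigsqcup_{x\in M}\{x\}\times\om_x$ (together with $dx_g=dx_h\otimes dx_{h'}$) recover $\int_0^T\int_\om|e^{itF(\triangle_g)}y|^2\,dx_g\,dt$. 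On the right-hand side, for each fixed $k$, the functions $(\phi_j)_{j:\,\mu_j=\la_k}$ are orthonormal in $L^2(M)$, so Parseval yields
\begin{equation*}
\int_M|a_{k,m}(x)|^2\,dx_h=\sum_{j:\,\mu_j=\la_k}|c_{j,m}|^2,
\end{equation*}
and summing over $k,m$ reconstructs $\sum_{j,m}|c_{j,m}|^2=\|y\|_{L^2(N)}^2$. Combining these two computations gives $C_T(\om)\geq c>0$.

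No step appears genuinely difficult: this is essentially a Fubini/Parseval reduction. The only point that requires a moment's care is the bookkeeping between the indexing by $(\mu_j)$ (needed to apply the semigroup diagonally) and the indexing by the \emph{distinct} eigenvalues $(\la_k)$ (required to match the hypothesis); the regrouping via the blocks $j\in[\al_k,\al_{k+1})$ handles this cleanly and the orthonormality within each block is exactly what makes the Parseval step on $M$ go through.
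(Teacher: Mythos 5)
Your proof is correct and follows essentially the same route as the paper's: slice along the $M$ factor, regroup the expansion of $y$ into blocks $j\in[\al_k,\al_{k+1})$ corresponding to the distinct eigenvalues $\la_k$, apply \eqref{main_estimate} for each fixed $x$, and conclude by Fubini and orthonormality of the $\phi_j$ within each block. If anything, your bookkeeping is slightly cleaner, since you apply the hypothesis (which already contains the time integral) pointwise in $x$ and only then integrate over $M$, obtaining the bound $C_T(\om)\geq c$ directly.
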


\begin{proof}
The objective is to prove \eqref{obs}.
Writing $y  = \sum_{l,m \geq 0} b_{l,m} \phi_l \phi'_m$, we have $e^{i t F(\triangle_g)} y = \sum_{k,m} b_{k,m}  \phi_k \phi'_m e^{i F(\mu_k + \mu'_m)t}$.
We denote by $G_x : C^\infty(N) \to C^{\infty}(\{ x \} \times M')$ the mapping $(G_xf)(q,q')=f(x,q')$. Setting $a_{k,m}(x) = \sum_{l = \al_k}^{\al_{k+1}-1} b_{l,m} \phi_l(x)$, using (\ref{main_estimate}) and the definition of $\al_k$, there exists $T,c >0$ such that 
\begin{multline*}
\int_0^T \int_\om | e^{it F(\triangle_g)} y|^2 \,dx_g\, dt =  \int_0^T  \int_M \int_{\om_x} |G_x e^{it F(\triangle_g)} y|^2 \,dx_{h'} \,dx_h(x) \,dt \\
=  \int_0^T \int_M \int_{\om_x }  \Big|\sum_{l,m \geq 0} b_{l,m} \phi_l(x) \phi'_m(x') e^{i F(\mu_k + \mu'_m)t}  \Big|^2  \,dx_{h'}(x')\, dx_h(x) \,dt \\
= \int_0^T  \int_M \int_{\om_x } 
  \Big|\sum_{k,m} a_{k,m}(x) \phi'_m(x') e^{i F(\la_k + \mu'_m)t}\Big|^2 \, dx_{h'}(x')\, dx_h(x)\, dt \\
\geq c \int_0^T  \int_M \sum_{k,m} |a_{k,m}(x)|^2 \,dx_h(x) \,dt 
= c T \sum_{k,m}  \int_M |\sum_{l = \al_k}^{\al_{k+1}-1} b_{l,m} \phi_l|^2 dx_h  \\
= cT \sum_{k,m} \int_M \sum_{\al_k \leq l,l' \leq \al_{k+1}-1} b_{l,m} b_{l',m} \phi_l \phi_l' \,dx_h  
=  cT \sum_{k,m} \int_M  \sum_{l = \al_k}^{\al_{k+1}-1} b_{l,m}^2 \phi_l^2 \,dx_h \\
=  cT \sum_{k,m} b_{k,m}^2 \int_M \phi_k^2 \,dx_h 
= cT  \sum_{k,m} b_{k,m}^2 
= cT \|y\|_{L^2(N)}^2.
\end{multline*}
This proves observability in time $T$.
\end{proof}

We define
$$g_1^V(\om)= \inf_{x, \phi'} \int_{\om_x} \phi'^2 $$
 where the infimum is taken over the set of all possible $x \in M$ and all possible eigenfunctions $\phi'$ of $\Delta_{h'}$ such that $\| \phi'\|_{L^2(M)}=1$. 
 
\begin{lemma}  \label{case2}
Assume that the family $(F(\la_k+\la'_m))_{k,m \in \N}$ satisfies the gap condition. Then \eqref{main_estimate} is satisfied with $c= g_1^V(\om)/2$ for $T$ large enough.
\end{lemma}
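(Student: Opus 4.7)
The plan is to establish \eqref{main_estimate} via Ingham's inequality in the time variable, applied pointwise in $x'\in\om_x$, and then to convert the resulting pointwise estimate into the desired norm estimate through the definition of $g_1^V(\om)$.

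As a first step, I regroup the basis functions $\phi'_m$ according to the eigenspaces of $\triangle_{h'}$. For each pair $(k,l)$, set $\psi_{k,l}(x'):=\sum_{m:\mu'_m=\la'_l} a_{k,m}\,\phi'_m(x')$; this is a (possibly zero) eigenfunction of $\triangle_{h'}$ associated with the eigenvalue $\la'_l$, and the orthonormality of $(\phi'_m)$ yields $\|\psi_{k,l}\|_{L^2(M')}^2=\sum_{m:\mu'_m=\la'_l}|a_{k,m}|^2$, hence $\sum_{k,l}\|\psi_{k,l}\|_{L^2(M')}^2=\sum_{k,m}|a_{k,m}|^2$. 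The integrand then reads $S(t,x'):=\sum_{k,l}\psi_{k,l}(x')\,e^{iF(\la_k+\la'_l)t}$.

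Next, enumerate the distinct values $(\tau_n)$ of $F(\la_k+\la'_l)$ and set $I_n:=\{(k,l):F(\la_k+\la'_l)=\tau_n\}$ and $\Psi_n:=\sum_{(k,l)\in I_n}\psi_{k,l}$. The gap hypothesis gives $|\tau_n-\tau_{n'}|\geq C$ for $n\neq n'$, so Ingham's inequality applied to $S(t,x')=\sum_n \Psi_n(x')\,e^{i\tau_n t}$ yields, for $T$ large enough depending on $C$, the pointwise-in-$x'$ bound $\int_0^T |S(t,x')|^2\,dt\geq\tfrac{1}{2}\sum_n|\Psi_n(x')|^2$. Integrating in $x'$ over $\om_x$ and using the definition of $g_1^V(\om)$ should then deliver the claimed estimate with $c=g_1^V(\om)/2$, provided $\int_{\om_x}|\Psi_n|^2\geq g_1^V(\om)\,\|\Psi_n\|_{L^2(M')}^2$ for every $n$ and $\sum_n\|\Psi_n\|_{L^2(M')}^2=\sum_{k,m}|a_{k,m}|^2$.

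The second identity follows from a key structural observation: within each class $I_n$, the $l$-indices are pairwise distinct, because $F$ being increasing forces the equality $\la_k+\la'_l=\la_{k'}+\la'_l$ to imply $k=k'$. Hence the summands $\psi_{k,l}$ of $\Psi_n$ lie in pairwise orthogonal eigenspaces of $\triangle_{h'}$, giving $\|\Psi_n\|_{L^2(M')}^2=\sum_{(k,l)\in I_n}\|\psi_{k,l}\|_{L^2(M')}^2$, and summation over $n$ recovers $\sum_{k,m}|a_{k,m}|^2$. The first inequality is the crucial point and the main obstacle: $g_1^V(\om)$ is defined for single normalized eigenfunctions of $\triangle_{h'}$, whereas $\Psi_n$ is in general a genuine linear combination of eigenfunctions from distinct eigenspaces whenever a resonance $\la_k+\la'_l=\la_{k'}+\la'_{l'}$ with $(k,l)\neq(k',l')$ occurs. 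Extending the $g_1^V$-bound from single eigenfunctions to such combinations — either by controlling the cross terms in $\int_{\om_x}|\Psi_n|^2$ or by invoking a natural generalization of $g_1^V(\om)$ to finite orthogonal sums of eigenfunctions — is where the proof will require its most delicate step.
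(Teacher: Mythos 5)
Your strategy coincides with the paper's (exploit the gap condition through an Ingham/Montgomery--Vaughan inequality in the time variable, then bound the surviving spatial diagonal from below via $g_1^V(\om)$), but your write-up stops exactly at the step that would prove the lemma. The inequality $\int_{\om_x}|\Psi_n|^2\,dx_{h'}\geq g_1^V(\om)\,\|\Psi_n\|_{L^2(M')}^2$ for a resonant cluster $\Psi_n$ mixing several eigenspaces of $\triangle_{h'}$ does not follow from the definition of $g_1^V(\om)$, which quantifies only over single (normalized) eigenfunctions, and you explicitly leave it open. Since cross resonances $\la_k+\la'_l=\la_{k'}+\la'_{l'}$ with $l\neq l'$ do occur in the situations the paper cares about (e.g.\ products of spheres), your argument as written does not deliver \eqref{main_estimate} with $c=g_1^V(\om)/2$: it is an honest reduction of the lemma to an unproved cluster estimate, not a proof.

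For comparison, the paper never forms your clusters $\Psi_n$. It expands the square over the individual indices $(k,m)$, with frequencies $\La_{k,m}=F(\la_k+\mu'_m)$, keeps as diagonal $A=\sum_{k,m}|a_{k,m}|^2\int_{\om_x}|\phi'_m|^2\geq g_1^V(\om)\sum_{k,m}|a_{k,m}|^2$ (each $\phi'_m$ being a normalized eigenfunction), and bounds the whole off-diagonal sum $B$ by $\frac{2}{TC_0}A$ via Montgomery--Vaughan, whence $c=g_1^V(\om)/2$ for $T$ large; no cluster inequality ever appears. Note, however, that this use of Montgomery--Vaughan presupposes that distinct pairs $(k,m)\neq(k',m')$ carry distinct frequencies: resonant pairs contribute $\hat{\psi}_T(0)=1$ and are not damped as $T\to\infty$. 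The resonances internal to one eigenspace of $\triangle_{h'}$ can be absorbed into the diagonal precisely because $g_1^V(\om)$ is an infimum over all eigenfunctions (this is your $\psi_{k,l}$ regrouping), but the cross-eigenvalue resonances you single out are not explicitly treated by the paper's argument either; your diagnosis of the delicate point is therefore accurate. What is missing from your proposal is any mechanism to close it --- for instance controlling the resonant cross terms in $\int_{\om_x}|\Psi_n|^2$, or working with a variant of $g_1^V(\om)$ defined over finite orthogonal sums of eigenfunctions attached to a single value of $\la_k+\la'_l$ --- and without such a mechanism the proof of the lemma is not complete.
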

 
\begin{proof}
Define $\La_{k,m}= F(\la_k + \mu'_m)$. By assumption, there exists $C_0>0$ such that if $\La_{k,m} \not= \La_{k',m'}$, then
\begin{equation} \label{gap}
 |\La_{k,m} - \La_{k',m'}| \geq C_0.
\end{equation}
Let $T>0$ and  $\psi_T$ the characteristic  function of the interval $[0,2T]$. Its Fourier transform $\hat{\psi}_T$ is equal to 
$\hat{\psi}_T(\xi)=  \frac{e^{iT\xi}-1}{T \xi}$. Noting that $\hat{\psi}_T(0)=1$, we have
\begin{multline} \label{sum}
\int_{0}^{2T}  \int_{\om_x }  \Big|\sum_{k,m} a_{k,m} \phi'_m  \, e^{i F(\la_k + \mu'_m)t}  \Big|^2 \, dx_{h'} \, dt  \\ 
= \sum_{k,m,k',m'} a_{k,m}\overline{a_{k',m'}} \hat{\psi}_T(\La_{k,m} - \La_{k',m'}) \int_{ \om_x} \phi'_m \phi'_{m'} = A   + B 
\end{multline}
with
$$
A= \sum_{k,m} |a_{k,m}|^2 \int_{ \om_x} |\phi'_m|^2  , \qquad
B= \sum_{(k,m) \not= (k',m')}  a_{k,m}\overline{a_{k',m'}} \hat{\psi}_T(\La_{k,m} - \La_{k',m'}) \int_{ \om_x} \phi'_m \phi'_{m'}. 
$$
Using the gap condition \eqref{gap}, it follows from Montgomery-Vaughan inequality (see \cite{MV}) that
$|B| \leq \frac{2}{T C_0} A$. Hence, we obtain from \eqref{sum} that 
$$
\int_{0}^{2T} \int_{\om_x }  \Big|\sum_{k,m} a_{k,m} \phi'_m e^{i F(\la_k + \mu'_m)t}  \Big|^2 \, dx_{h'}\, dt 
\geq \left(1 - \frac{2}{TC} \right)A
\geq \frac{1}{2} A
$$
when $T$ is large enough.
Noting that $A \geq \sum_{k,m} |a_{k,m}|^2 g_1^V(\om)$, the inequality \eqref{main_estimate} follows with $c= g_1^V(\om)/2$. 
\end{proof}

\begin{lemma} \label{g1vgcc}
If $(\om,T)$ satisfies VGCC then $g_1^V(\om)>0$.
\end{lemma}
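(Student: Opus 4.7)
The plan is to argue by contradiction. Suppose $g_1^V(\om) = 0$; then there exist sequences $x_n \in M$ and normalized eigenfunctions $\phi'_n$ of $\triangle_{h'}$ with $\int_{\om_{x_n}} |\phi'_n|^2 \to 0$. Since $M$ is compact, one can extract a subsequence so that $x_n \to x_\infty \in M$. By VGCC, the open subset $\om_{x_\infty}$ of $M'$ satisfies GCC within time $T$.

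The crux of the argument is to replace the moving sets $\om_{x_n}$ by a single fixed open subset $V$ of $M'$ which is contained in all of them for $n$ large, on which one can apply a uniform eigenfunction estimate. The first step is thus to produce an open $V$ with $\overline{V} \subset \om_{x_\infty}$ that still satisfies GCC in some finite time $T' \geq T$. This can be done by a compact exhaustion $V_j \uparrow \om_{x_\infty}$ with $\overline{V_j} \subset V_{j+1}$ combined with a compactness argument on the unit cotangent bundle of $M'$: if every $V_j$ failed GCC in time $T+1$, one could extract a limiting unit-speed geodesic of length $T+1$ avoiding $\om_{x_\infty}$ (since every compact subset of $\om_{x_\infty}$ is eventually contained in $V_j$), contradicting GCC for $\om_{x_\infty}$ itself.

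With such $V$ in hand, the next step is to show that $V \subset \om_{x_n}$ for all $n$ large. This follows from the fact that $\overline{V} \times \{x_\infty\}$ is a compact subset of the open set $\om$: by the tube lemma there is a neighborhood $U$ of $x_\infty$ in $M$ such that $\overline{V} \times U \subset \om$, hence $V \subset \om_{x_n}$ as soon as $x_n \in U$.

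Finally, Lebeau's theorem applied to the standard Schr\"odinger equation on $M'$ yields an observability inequality on $(V, T')$. Specializing to an eigenfunction $\phi$ of $\triangle_{h'}$ with $\|\phi\|_{L^2(M')} = 1$, the identity $|e^{it \triangle_{h'}} \phi|^2 = |\phi|^2$ reduces this inequality to the uniform lower bound $\int_V |\phi|^2 \geq c_V$ for some $c_V > 0$ independent of $\phi$. Thus $\int_{\om_{x_n}} |\phi'_n|^2 \geq \int_V |\phi'_n|^2 \geq c_V$ for $n$ large, contradicting the assumption and completing the proof. The main obstacle is the construction of $V$, which requires combining the exhaustion of $\om_{x_\infty}$ with the geodesic-flow compactness argument on the unit cotangent bundle of $M'$.
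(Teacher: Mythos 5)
Your proof is correct, but it takes a genuinely different route from the paper at the decisive step. Both arguments begin the same way: negate $g_1^V(\om)>0$, extract $x_n\to x_\infty$ by compactness of $M$, and observe that VGCC forces the slice $\om_{x_\infty}\subset M'$ to satisfy GCC in time $T$; your tube-lemma step also parallels the paper's use of an exhaustion of the slice together with openness of $\om$ to compare the moving sets $\om_{x_n}$ with a fixed set. The divergence is in how the contradiction is reached. The paper stays with the eigenfunctions themselves: it extracts a quantum limit $\mu$ of $(\phi'_n)^2$ on $M'$, shows $\mu(\om_{x_\infty})=0$ by Portmanteau, and then uses invariance of $\mu$ under the geodesic flow (Egorov) together with a Krein--Milman approximation by Dirac measures along periodic geodesics to produce a geodesic of $M'$ avoiding $\om_{x_\infty}$, contradicting VGCC. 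You instead fix, via the exhaustion-plus-geodesic-compactness argument, an open $V$ with $\overline{V}\subset\om_{x_\infty}$ still satisfying GCC, place it inside every $\om_{x_n}$ for large $n$ by the tube lemma, and invoke Lebeau's theorem (GCC implies Schr\"odinger observability on $M'$), specialized to the stationary solutions $e^{it\mu'_m}\phi'_m$, to get the uniform bound $\int_V \phi'^2\geq c_V>0$ for all normalized eigenfunctions --- directly contradicting $\int_{\om_{x_n}}(\phi'_n)^2\to 0$. What each buys: your argument imports a strong black box (full Schr\"odinger observability under GCC) of which only the trivial eigenfunction specialization is used, but in exchange it avoids any dynamical reasoning about invariant measures --- in particular the delicate approximation of an invariant measure by convex combinations of Dirac measures on periodic geodesics that the paper's proof leans on --- so it is arguably the more robust of the two; the paper's proof is purely semiclassical and uses the same toolset it needs elsewhere (e.g.\ for Corollary \ref{cor3}), at no extra cost in external results beyond Egorov. (The ordering $\overline{V}\times\{x_\infty\}$ should of course be $\{x_\infty\}\times\overline{V}$ with the paper's convention $N=M\times M'$, but this is cosmetic.)
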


\begin{proof}
Assume that $\om$ satisfies VGCC. By contradiction, let us assume that $g_1^V(\om)=0$. This means that for every $\ep >0$, there exists $x_\ep \in M$ and an eigenfunction $\phi'_\ep$ of $\Delta_{h'}$ such that   $\|\phi'_\ep\|_{L^2(M')} = 1$ and such that $\int_{\om_{x_\ep}} \phi_{\ep}^2 dx_g \leq \ep$, where we recall that  $\om_{x_\ep} = \om \cap \left(\{x_\ep \} \times M'\right)$.
By compactness, we assume that $x_\ep \to x_0 \in M$ and that $(\phi'_\ep)^2 \to \mu$ weakly, where $\mu$ is a quantum limit of $M'$. Let $U_k$ be an increasing sequence of open sets such that $\overline{U_k} \subset U_{k+1}$ and such that $\cup_k U_k = \om_0= \om \cap \left(\{x_0\} \times M'\right) $. Since $\om$ is open, for all $k\in\N$ and $\ep>0$ small enough, we have $U_k \subset \om_{x_\ep}$. This implies that 
$\int_{U_k} (\phi'_{\ep})^2 \, dx_g \leq \ep.$ We infer from the Portmanteau theorem (see Appendix \ref{cintre}) that $\mu(U_k)=0$, and thus $\mu(\om_0) =0$. 
This implies that GCC does not hold for $\om_0$ in any time. 
Indeed, by the Egorov theorem (see \cite{Egorov,zworski}), $\mu$ is invariant under the geodesic flow, as a measure on $S^*M'$. By the Krein-Milman theorem, $\mu$ can be approximated by a sequence $(\mu_k)_{k\in\N}$ of convex combinations of Dirac measures along periodic geodesics. Since $\mu_k(\om_0)\rightarrow 0$, there exists a sequence of periodic geodesics $\gamma_k$ such  that, if $\delta_k$ is the Dirac measure along $\gamma_k$, we have $\delta_k(\mu_0)\rightarrow 0$. This means that the time spent by $\gamma_k$  (actually, by its projection onto $M'$) in $\om_0$ tends to $0$. By compactness of geodesics, $\gamma_k$ converges to some geodesic $\gamma$. Again by the Portmanteau theorem, $\gamma$ does not meet $\om$, hence GCC on $M'$ fails for $\om_0$ and this contradicts that VGCC is satisfied for $\om$.  
\end{proof}

\subsection{Proof of Corollary \ref{cor3}}
We prove the vertical case, the horizontal case being symmetric.
We rearrange the set $\{ \la_j+\la'_k\ \mid\ j,k \in\N \} = \{ d_k\ \mid\ k \in\N\}$ with an increasing sequence $(d_k)_{k \in\N}$. Let $F$ be an increasing function such that $F(d_k)=k$ for every $k\in\N$.
By construction, the set $\{ F(\la_j+\la'_k) \ \mid\ j,k \in\N \}$ satisfies the gap condition. Let  $\Gamma$ be the support of a quantum limit $\mu$ on $M \times M'$. Since $F(\triangle_g)$ and $\triangle_g$ have the same eigenfunctions, $\mu$ is also the weak limit of a sequence of $\psi_j^2 \, dx_g\, d_\xi$ where $\psi_j$ are eigenfunctions of $F(\triangle_g)$ satisfying $\|\psi_j\|_{L^2} = 1$. We set $\om_\ep = \{x\in N\ \mid\ d_g(x,\Gamma)>\ep\}$, for $\ep>0$ small enough. For every $T>0$, $(\om_\ep,T)$ is not observable for \eqref{schrod} because $y=\psi_j$ provides a sequence of test functions which, at the limit, lie on $\Gamma$. Hence, by Theorem \ref{main}, $(\om_\ep,T)$ does not satisfy VGCC. Remark \ref{ex_vgcc}  implies that $\Gamma$ must contain a vertical geodesic.

\appendix

\section{Appendix}

\subsection{Quantum limits} \label{ql}
We recall that a \textit{quantum limit} (QL in short) $\mu$, also called \textit{semi-classical measure}, is a probability Radon (i.e., probability Borel regular) measure on $S^*M$ that is a closure point (weak limit), as $\lambda\rightarrow+\infty$, of the family of Radon measures $\mu_\lambda(a)=\langle\Op(a)\phi_\lambda,\phi_\lambda\rangle$ (which are asymptotically positive by the G\aa rding inequality), where $\phi_\lambda$ denotes an eigenfunction of norm $1$ associated with the eigenvalue $\lambda$ of $\sqrt{\triangle}$. Here, $\Op$ is any quantization.
We speak of a \textit{QL on $M$} to refer to a closure point (for the weak topology) of the sequence of probability Radon measures $\phi_\lambda^2\, dx_g$ on $M$ as $\lambda\rightarrow +\infty$. 
Note that QLs do not depend on the choice of a quantization. We denote by $\QL(S^*M)$ (resp., $\QL(M)$) the set of QLs (resp., the set of QLs on $M$). Both are compact sets.

Given any $\mu\in\QL(S^*M)$, the Radon measure $\pi_*\mu$, image of $\mu$ under the canonical projection $\pi:S^*M\rightarrow M$, is a probability Radon measure on $M$. It is defined, equivalently, by $(\pi_*\mu)(f) = \mu(\pi^*f) = \mu(f\circ\pi)$ for every $f\in C^0(M)$ (note that, in local coordinates $(x,\xi)$ in $S^*M$, the function $f\circ\pi$ is a function depending only on $x$), or by $(\pi_*\mu)(\omega)=\mu(\pi^{-1}(\omega))$ for every $\omega\subset M$ Borel measurable (or Lebesgue measurable, by regularity).
It is easy to see that\footnote{Indeed, given any $f\in C^0(M)$ and any $\lambda\in\mathrm{Spec}(\sqrt{\triangle})$, we have
$$
(\pi_*\mu_\lambda)(f) = \mu_\lambda(\pi^*f) = \langle\Op(\pi^*f)\phi_\lambda,\phi_\lambda\rangle = \int_M f \phi_\lambda^2\, dx_g ,
$$
because $\Op(\pi^*f)\phi_\lambda=f\phi_\lambda$. The equality then easily follows by weak compactness of probability Radon measures.
}
\begin{equation*}
\pi_* \QL(S^*M) = \QL(M) .
\end{equation*}
In other words, QLs on $M$ are exactly the image measures under $\pi$ of QLs.


\subsection{Portmanteau theorem} \label{cintre}
Let us recall the Portmanteau theorem (see, e.g., \cite{Billingsley}).
Let $X$ be a topological space, endowed with its Borel $\sigma$-algebra. 
Let $\mu$ and $\mu_n$, $n\in\N^*$, be finite Borel measures on $X$. Then the following items are equivalent:
\begin{itemize}
\item $\mu_n\rightarrow\mu$ for the narrow topology, i.e., $\int f\, d\mu_n\rightarrow\int f\, d\mu$ for every bounded continuous function $f$ on $X$;
\item $\int f\, d\mu_n\rightarrow\int f\, d\mu$ for every Borel bounded function $f$ on $X$ such that $\mu(\Delta_f)=0$, where $\Delta_f$ is the set of points at which $f$ is not continuous;
\item $\mu_n(B)\rightarrow\mu(B)$ for every Borel subset $B$ of $X$ such that $\mu(\partial B)=0$;
\item $\mu(F)\geq\limsup\mu_n(F)$ for every closed subset $F$ of $X$, and $\mu_n(X)\rightarrow\mu(X)$;
\item $\mu(O)\leq\liminf\mu_n(O)$ for every open subset $O$ of $X$, and $\mu_n(X)\rightarrow\mu(X)$.
\end{itemize}


\paragraph{Acknowledgment.} 
The first author is supported by the project THESPEGE (APR IA), R\'egion Centre-Val de Loire, France, 2018-2020.

\end{document}